\newtheorem{lemma}{LEMMA}[section]
\newtheorem{proposition}[lemma]{PROPOSITION}
\newtheorem{corollary}[lemma]{COROLLARY}
\newtheorem{theorem}[lemma]{THEOREM}
\newtheorem{remark}[lemma]{REMARK}
\newcommand{\real}{\mathbbm{R}}
\newcommand{\nat}{\mathbbm{N}}
\newcommand{\limn}{\lim_{n \to \infty}}
\renewcommand{\a}{\alpha}
\newcommand{\g}{\gamma}
\newcommand{\vp}{\varphi}
\newcommand{\reald}{{\real^d}}
\newcommand{\on}{\quad\text{ on }}
\newcommand{\und}{\quad\mbox{ and }\quad}
\newcommand{\ov}{\overline}
\newcommand{\V}{\mathcal V}  
\newcommand{\W}{\mathcal W}
\newcommand{\C}{\mathcal C}  
\newcommand{\E}{{\mathcal E}}
\newcommand{\F}{\mathcal F}
\renewcommand{\H}{{\mathcal H}}
\newcommand{\B}{\mathcal B}
\newcommand{\M}{\mathcal M}
\newcommand{\U}{{\mathcal U}}
\newcommand{\supp}{\operatorname*{supp}}
\newcommand{\itemframe}%
{\setlength{\parskip}{10pt}\begin{enumerate} \setlength{\topsep}{10pt}%
\setlength{\itemsep}{15pt}\setlength{\parsep}{5pt}}
\newcommand{\schluss}{\end{frame}\end{document}}
\newcommand{\bbx}{\B_b(X)}
\newcommand{\pub}{\mathcal P_b(U)}
\newcommand{\px}{\mathcal P(X)}
\newcommand{\hu}{\H_1^+(U)}
\date{}
\title{Equicontinuity of harmonic functions\\ and compactness of potential kernels}
\author{Wolfhard Hansen}
\begin{document}

\maketitle

\begin{abstract}
  Within the framework of balayage spaces (the analytical equivalent of nice Hunt processes),
  we prove  equicontinuity of bounded families of harmonic functions and apply it to
  obtain   criteria for compactness of potential kernels.
\end{abstract}

\section{Introduction}

The main purpose of this paper is to provide simple
criteria for compactness of potential kernels
(Proposition \ref{compact} and Corollary \ref{final})
in the general framework of  balayage spaces $(X,\W)$
(specified by $(B_0)$\,--\,$(B_3)$  and Remarks \ref{Hunt}, \ref{Hunt-converse} below).
These results shall be essential in a~forthcoming paper \cite{H-Bogdan-semi}.

They are based on  \cite[Lemma~3.1]{H-modification},
  where compactness of potential kernels  
   for continuous real potentials   with compact superharmonic
   support   has been stated.  Its proof used
   (local) equicontinuity of bounded families of harmonic functions without providing   any details or   references.
   Therefore we shall first prove such an equicontinuity before getting to compactness
   of potential kernels.

In the following let $X$ be a locally compact space with countable base. For every open set~$U$ in~$X$,
let~$\B(U)$ ($\C(U)$ resp.)  denote the set of all Borel measurable numerical functions  (continuous
real functions resp.) on $U$.
Further, let $\C_0(U)$   be the set of all    functions in $\C(U)$ which vanish at infinity
with respect to $U$.
Given any set $\F$ of functions, let~$\F_b$ ($\F^+$ resp.) denote the set of  
 bounded (positive resp.) functions in $\F$.

We recall that $(X,\W)$ is  called a \emph{balayage space},   if $\W$ is a convex cone of positive numerical functions
on~$X$ such that $(B_0)$ -- $(B_3)$ hold:
\begin{itemize} 
\item  [{\rm (B$_0$)}] 
  $\W$ has the  following continuity, separation and transience properties: 
  \begin{itemize}
\item[\rm (C)]  
Every $w\in \W$ is the supremum of its minorants in $\W\cap\C(X)$.
\item[\rm (S)]  
  For all $x\ne y$ and  $\g>0$, there is a~function $v\in \W$ 
such that  $v(x)\ne \g v(y)$.
\item[\rm (T)]  
There are strictly positive functions $u,v\in\W\cap \C(X)$  with~$u/v\in\C_0(X)$.
  \end{itemize} 
\item   [{\rm (B$_1$)}] 
If $v_n\in \W$, $v_n\uparrow v$, then $v\in \W$.
\item  [{\rm (B$_2$)}] 
If $\V\subset \W$, then $\widehat{\inf \V}^f\in\W$. 
\item   [{\rm (B$_3$)}] 
If $u,v',v''\in\W$,  $u\le v'+v''$, then  there exist $u',u''\in\W$ such that  $u=u'+u''$ and $u'\le v'$,  $u''\le v''$. 
\end{itemize}
 Here $\hat g^f$ is the greatest finely lower semicontinuous minorant of $g$, where the ($\W$-)\emph{fine topology}
on $X$  is the coarsest topology 
 such that  functions in $\W$ are continuous.

 \begin{remark}\label{Hunt}
   {\rm
 If
     $\mathbbm P=(P_t)_{t>0}$ is a sub-Markov semigroup on $X$ (for example, the transition semigroup
      of a Hunt process  $\mathfrak X$) such that its convex cone 
\begin{equation*} 
\E_{\mathbbm P} :=\{u\in \B^+(X)\colon \sup_{t>0} P_tu=u\}
\end{equation*} 
of excessive functions satisfies $(B_0)$, then $(X,\E_{\mathbbm P})$ is a balayage space;
see \cite[II.4.9]{BH} or  \cite[Corollary 2.3.8]{H-course}. We might note that the essential part of $(B_0)$, the continuity
property $(C)$,   holds, if the resolvent kernels $V_\lambda:=\int_0^\infty P_t\,dt$, $\lambda>0$, are strong
Feller, that is, $V_\lambda(\B_b(X))\subset  \C_b(X)$. --
For a converse, see Remark \ref{Hunt-converse}.
}
\end{remark}

For this and  an exposition of  the theory of balayage spaces in   detail,
see \cite{BH,H-course}; for      
a description,  which is   more expanded than the one given here and includes
a~discussion of examples,
we mention \cite{H-prag-87} and  \cite[Appendix 8.1]{HN-unavoidable-hameasures}.

In the following, let $(X,\W)$ be a balayage space.  The reader may have in mind  that
      we mostly can assume without loss of generality that $1\in \W$, since
      $(X, (1/s_0)\W) $ is a  balayage space       for every  strictly positive  $s_0\in \W\cap \C(X)$.
    
We recall that the set $\px$ of \emph{continuous real potentials on $X$}   is defined by
\begin{equation*}
      \px:=\{p\in\W\cap \C(X)\colon \exists\  w\in \W\cap \C(X), \, w>0,
  \,  \hbox{with } p/w\in\C_0(X)\}.   
\end{equation*}
Of course, $\px$ is a convex cone.
Moreover, every function in $\W$ is the limit of an increasing sequence
in $\px$ and, for every $p\in \px$, there exists a strictly positive $q\in \px$ such that $p/q\in \C_0(X)$
(see \cite[II.4.6]{BH} or \cite[Proposition 1.2.1]{H-course}).

A general minimum principle 
implies that, for every $p\in\px$, there exists a~smallest
closed set $C(p)$  in $X$ (it is the closure of the \emph{Choquet boundary} of the function cone $\px+\real p$) such that
\begin{equation}\label{dom}
p=  R_p^{C(p)}:=\inf\{w\in \W\colon w\ge p\mbox { on } C(p)\},
\end{equation}
called  \emph{carrier} or \emph{superharmonic support}  of $p$ (see \cite[II.6.3]{BH} or \cite[Proposition 4.1.6]{H-course}).   
For every $p\in \px$, there is a~unique kernel $K_p$ on $X$, called \emph{associated potential kernel}
or \emph{potential kernel for $p$}, such that the following holds (see \cite[II.6.17]{BH}):
\begin{itemize}
\item
  $K_p1=p$.
\item
   For every $f\in \B_b^+(X)$,    $K_pf\in \px$ and $C(K_pf)\subset \supp(f)$.
 \end{itemize}
 
Further, 
 we recall that, for every open set~$V$ in~$X$,
we have a~\emph{harmonic kernel}~$H_V$ given by
\begin{equation}\label{HV}
  H_Vp=R_p^{X\setminus V}= \inf\{ w\in \W\colon w\ge p\mbox{ on } X\setminus V\}
\end{equation}
for every $p\in \px$ (see \cite[p.\,98 and II.5.4]{BH} or \cite[Section 4.2]{H-course}). 
 
 \begin{remark}\label{Hunt-converse}
      {\rm
 The following holds (see \cite[II.8.6, proof of  IV.8.1 and VI.3.14]{BH}):  If   $1\in \W$, then, for every  $p\in \mathcal P_b(X)$ which is strict
  (that is, satisfies $K_p1_W\ne 0$ for every finely open Borel set $W\ne \emptyset$),
  there exists a  Hunt processes $\mathfrak X$ on $X$ such that its transition semigroup $\mathbbm P=(P_t)_{t>0}$ satisfies
  \begin{equation*}
    \E_{\mathbbm P}=\W \und \int_0^\infty P_t\,dt =K_p
  \end{equation*}
  (so that, in particular, the resolvent kernels are strong Feller).
  
  If $\tau_V$ is the \emph{exit time} of an open set $V$,  that is,
$\tau_V:=\inf\{t\ge 0\colon X_t\notin V\}$, then
\begin{equation*}
\mathbbm E^x(f\circ X_{\tau_V})=  H_Vf(x), \qquad f\in \B^+(X),\,x\in X.
\end{equation*}  
}
\end{remark}

Now let $U$ be an open set in $X$ and let $\U(U)$ denote the set of all open sets~$V$ with compact closure in $U$.
As  usual, let ${}^\ast\H(U)$ denote the set of  functions $u\in \B(X)$  which are \emph{hyperharmonic on $U$}, that is,
are lower semicontinuous on $U$ and satisfy
\begin{equation*}
  -\infty <H_Vu(x)\le u(x) \quad\mbox{ for all } x\in V\in \U(U).
\end{equation*}
We note that ${}^\ast\H^+(X)=\W$ (see \cite[II.5.5]{BH} or \cite[Proposition 4.1.7]{H-course}).  
The set $\H(U):={}^\ast\H(U)\cap (-{}^\ast\H(U))$  is the set of   functions in $\B(X)$ which are \emph{harmonic on~$U$},
that is, 
\begin{equation*}
  \H(U) =\{h\in\B(X)\colon h|_U\in \C(U), \ H_Vh=h\mbox{ for every }V\in \U(U)\}.
\end{equation*}

The equalities  (\ref{dom}) and (\ref{HV} immediately imply that 
the superharmonic support $C(p)$ for $p\in\px$ is the smallest closed set such that $p$ is harmonic on its complement.

In particular, we have  the following, if  there is a Green function~$G $ for $(X,\W)$. If $p\in \px$
such that
    $ 
      p=G \mu:=\int G (\cdot,y)\,d\mu(y)
      $
      for some measure $\mu\ge 0$ on $X$
    (see~\cite{HN-representation} for such a  representation), then, for every $f \in \B^+(X)$,
    \begin{equation*}
      K_pf=G (f\mu). 
    \end{equation*}

    By \cite[III.2.8 and III.1.2]{BH},
    \begin{equation}\label{char-px}
        \px=\{p\in \W\cap C(X)\colon \mbox{If $h\in \H^+(X)$ and $h\le p$, then $h=0$}\},
        \end{equation} 
        and
        \begin{equation}\label{ex-harmonic}
          H_Uf\in  \H(U), \quad\mbox{ if } f\in \B(X)\mbox{ and } |f|\le s\in \W\cap \C(X). 
        \end{equation}

        Finally, we note that a function $h\in \B^+(X)$ satisfying $h|_U\in \C(U)$ is harmonic on $U$ provided that,
        for every $x\in U$,
        there is a fundamental system $\V(x)\subset \U(U)$ of neighborhoods  of $x$   with  $H_Vh(x)=h(x)$
        for every $V\in \V(x)$
        (see \cite[III.4.4]{BH} or \cite[Corollary~5.2.8]{H-course}). Analogously  for hyperharmonic functions.
 In particular,  for positive functions, being harmonic (hyperharmonic resp.) on an open set is a~local property
in the following sense: If $(U_i)_{i\in I}$ is a family of open sets in $X$, then 
\begin{equation}\label{harmonic-union}
  \bigcap_{i\in I} \H^+(U_i)=            \H^+ (\bigcup_{i\in I} U_i ) \und
  \bigcap_{i\in I} {}^\ast\H^+(U_i)=         {}^\ast   \H^+ (\bigcup_{i\in I} U_i ).
\end{equation}

\section{Equicontinuity of sets of harmonic functions}\label{equicontinuous}

Let $U$ be an open set in $X$,  $s\in \W\cap\C(X)$ and 
\begin{equation*}
  \H_s(U):=\{h\in \H(U)\colon  | h|\le s\}.
  \end{equation*} 
The main result of this section is the following.
  
\begin{theorem}\label{equi}
 The set $\H_s(U)$ is {\rm(}locally{\rm)} equicontinuous on $U$.
\end{theorem}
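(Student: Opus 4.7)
The plan is to reduce the equicontinuity of $\H_s(U)$ to equicontinuity of a uniformly bounded family of \emph{positive} harmonic functions on a relatively compact subdomain, and then invoke a Harnack-type estimate available in the balayage-space framework.

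\textbf{Step 1 (Reduction via a positive decomposition).} Fix $x_0\in U$ and choose $V\in\U(U)$ containing $x_0$. For each $h\in\H_s(U)$, harmonicity of $h$ on $U$ gives $h=H_Vh$ on $V$, and linearity of the kernel $H_V$ yields
\begin{equation*}
h=H_V(s+h)-H_Vs\quad\text{on }V.
\end{equation*}
Since $0\le s+h\le 2s$ and $2s\in\W\cap\C(X)$, property (\ref{ex-harmonic}) gives $H_V(s+h)\in\H^+(V)$ with $H_V(s+h)\le H_V(2s)=2H_Vs\le 2s$; likewise $H_Vs\in\H^+(V)$. The function $H_Vs$ does not depend on $h$ and is continuous on $V$ (being harmonic there), so the required equicontinuity of $\H_s(U)$ at $x_0$ reduces to the same property for the family $\G:=\{H_V(s+h):h\in\H_s(U)\}\subset\H^+(V)$, which is uniformly bounded by $M:=2\sup_{\ov V}s<\infty$.

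\textbf{Step 2 (Harnack-type equicontinuity).} It therefore suffices to show that every uniformly bounded subfamily of $\H^+(V)$ is locally equicontinuous on $V$. The plan is to produce a Harnack inequality with constant tending to $1$ on small neighborhoods of $x_0$: for every $\ve>0$, there is a neighborhood $W$ of $x_0$ in $V$ such that for all $u\in\H^+(V)$ and all $x\in W$,
\begin{equation*}
(1-\ve)\,u(x_0)\le u(x)\le(1+\ve)\,u(x_0).
\end{equation*}
For $u\in\G$ bounded by $M$ this then gives $|u(x)-u(x_0)|\le\ve M$ on $W$, which is the desired equicontinuity of $\G$ (and hence of $\H_s(U)$) at $x_0$.

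\textbf{Step 3 (Main obstacle).} The crux of the argument is the Harnack inequality above in the abstract setting. I would attack it by choosing a regular open $V'\in\U(V)$ with $x_0\in V'$ and establishing a comparison of harmonic measures $c^{-1}H_{V'}(x_0,\cdot)\le H_{V'}(x,\cdot)\le c\,H_{V'}(x_0,\cdot)$ with $c=c(x)\to 1$ as $x\to x_0$; applying $u=H_{V'}u$ on $V'$ then integrates the comparison into the required Harnack bounds. The comparability of the harmonic measures is the principal technical hurdle: it rests on the continuity axiom $(C)$ in $(B_0)$ --- equivalent, by Remark \ref{Hunt}, to the strong Feller property of the resolvent --- together with the existence of a sufficient supply of regular open neighborhoods of $x_0$, as developed in the general theory of \cite{BH}. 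Everything else is routine bookkeeping.
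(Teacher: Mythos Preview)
Your Step~1 reduction to a uniformly bounded family of \emph{positive} harmonic functions is correct and is essentially the same as the paper's first reduction (the paper writes $h=H_Vh^+-H_Vh^-$, you write $h=H_V(s+h)-H_Vs$; both work).

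The gap is in Steps~2--3. You need, for each $x_0\in U$ and each $\ve>0$, a neighborhood $W$ with
\[
c(x)^{-1}H_{V'}(x_0,\cdot)\le H_{V'}(x,\cdot)\le c(x)\,H_{V'}(x_0,\cdot),\qquad c(x)\to 1,
\]
and you propose to derive this from the continuity axiom $(C)$ (equivalently, the strong Feller property). But $(C)$ only gives that $x\mapsto H_{V'}(x,A)$ is continuous for each Borel set $A$, i.e.\ \emph{setwise} convergence of the harmonic measures. It does \emph{not} yield mutual absolute continuity with Radon--Nikodym derivative close to $1$, nor even convergence in total variation; and it is precisely total-variation continuity of $x\mapsto H_{V'}(x,\cdot)$ that is equivalent to the equicontinuity you are after (since $|h(x)-h(x_0)|=|\int h\,d(H_{V'}(x,\cdot)-H_{V'}(x_0,\cdot))|$). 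So the ``principal technical hurdle'' you identify is the whole theorem, and the tool you invoke is too weak to clear it. The phrase ``everything else is routine bookkeeping'' is therefore misleading: nothing has been proved beyond the easy reduction.

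The paper does not go through a Harnack inequality at all. It splits $U$ into the set $X_0$ of points where harmonic measures of small neighborhoods concentrate near the point, and the complement (finely isolated points). For $x\in U\cap X_0$ it uses an argument inspired by Meyer--Mokobodzki: first a weak-$\ast$ compactness argument in $L^\infty(\sigma)$ (for a suitable reference measure $\sigma$ built from harmonic measures at a dense sequence) shows that every sequence in $\H_1^+(U)$ has a pointwise convergent subsequence; then a contradiction argument, using $H_V1_U(x)>1-\delta$ and a Dini-type passage from monotone pointwise to uniform convergence, upgrades this to equicontinuity. For $x\in U\setminus X_0$ it uses the continuous potential $q_x$ supported at the finely isolated point $x$ and the minimum principle. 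Neither branch needs or proves a comparison of harmonic measures of the kind you assume.
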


It will be sufficient to prove that $\H_s^+(U)$ is locally equicontinuous on $U$,
since, for all $V\in \U(U)$ and $h\in \H_s(U)$, we know that $h=H_Vh=H_Vh^+-H_Vh^-$,
where $H_Vh^\pm\in \H_s^+(V)$; see (\ref{ex-harmonic}).

We   assume (without loss of generality) that $s=1$ and 
 establish  
the equicontinuity first  at points of $U$ which are contained in  the set 
 \begin{equation*}
   X_0:=\{x\in X\colon \lim_{V\downarrow x} H_V(x,W)=1
   \mbox{ for every open neighborhood $W$ of $x$}\}.
 \end{equation*}
In many cases,  for example for harmonic spaces and for the balayage space given by Riesz potentials
(symmetric $\a$-stable processes) on $\reald$, we have $X_0=X$. 
 
Our approach is based on (\ref{ex-harmonic}) 
and inspired by  a   result in~\cite{meyer-moko} on the composition
 of two strong Feller kernels.  We start with the following lemma.

    \begin{lemma}\label{pointwise-on-V}
   Let $V$ be a relatively compact open set in $X$ and let $(f_n)$ be a~sequence in $\B(X)$,
   $0\le f_n\le 1$ for every $n\in\nat$. Then there exists a subsequence~$(f_n')$ of~$(f_n)$
   and a function $f\in \B(X)$ such that $0\le f\le 1$ and the sequence $(H_Vf_n')$ converges
   pointwise to $H_Vf$ on $V$.
 \end{lemma}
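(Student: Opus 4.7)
The plan is to extract the subsequence via weak-$*$ compactness in $L^\infty$, using a reference measure $\nu$ built from the harmonic measures at a countable dense subset of $V$.

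First, pick a sequence $(x_k)_{k\in\nat}$ that is dense in $V$ and set
\[
  \nu := \sum_{k\ge 1} 2^{-k} H_V(x_k, \cdot),
\]
a finite positive Borel measure on $X$ of total mass at most $1$. Since $X$ has a countable base, $L^1(\nu)$ is separable, so the closed unit ball of $L^\infty(\nu)$ is sequentially compact for the weak-$*$ topology. This yields a subsequence $(f_n')$ of $(f_n)$ and a Borel function $f\colon X\to[0,1]$ (a Borel representative of the weak-$*$ limit, extended arbitrarily off $\supp \nu$) such that
\[
  \int f_n'\,g\,d\nu \ \longrightarrow \ \int f\,g\,d\nu \qquad \text{for every } g\in L^1(\nu).
\]

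The crucial step is that $H_V(x,\cdot)\ll \nu$ for \emph{every} $x\in V$, not only for $x=x_k$. Indeed, if $A\in\B(X)$ with $\nu(A)=0$, then $H_V 1_A$ vanishes at each $x_k$ by construction of $\nu$, and by (\ref{ex-harmonic}), applied with $s=1$ (the standing normalization of Section \ref{equicontinuous}), it is harmonic, hence continuous, on $V$. Density of $(x_k)$ then forces $H_V 1_A\equiv 0$ on $V$, giving the claimed absolute continuity. The density $\rho_x := dH_V(x,\cdot)/d\nu$ lies in $L^1(\nu)$ since $\int \rho_x\,d\nu = H_V 1(x)\le 1$, so weak-$*$ convergence applied to the test function $g=\rho_x$ yields
\[
  H_V f_n'(x)\ =\ \int f_n'\rho_x\,d\nu \ \longrightarrow \ \int f\rho_x\,d\nu \ =\ H_V f(x), \qquad x\in V,
\]
which is the asserted pointwise convergence on $V$.

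The step I expect to require the most care is the absolute continuity $H_V(x,\cdot)\ll\nu$: weak-$*$ convergence alone only controls $H_V f_n'(x)$ on the countable dense subset $\{x_k\}$, and equicontinuity of $\{H_V f_n'\}$ on $V$ is not available as a shortcut, since it is precisely the content of Theorem \ref{equi} that this lemma is meant to support. The continuity of $H_V 1_A$ on $V$ coming from (\ref{ex-harmonic}), combined with the density of $(x_k)$, is what cleanly bridges this gap.
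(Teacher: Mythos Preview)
Your argument is essentially identical to the paper's: the same reference measure $\nu=\sum 2^{-k}H_V(x_k,\cdot)$ built from a dense sequence, the same weak-$*$ extraction in $L^\infty(\nu)$, and the same absolute-continuity step via continuity of $H_V1_A$ from (\ref{ex-harmonic}) to pass from the dense subset to all of $V$. If anything, you are slightly more explicit in justifying sequential weak-$*$ compactness through separability of $L^1(\nu)$.
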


 \begin{proof} Let $\{x_m\colon m\in \nat\}$ be a dense sequence in $V$ and
   $\sigma:=\sum_{m=1}^\infty  2^{-m}H_V(x_m,\cdot)$. Then $\sigma(X)\le 1$. 
   Since $L^{\infty}(\sigma)$ is the dual of $L^1(\sigma)$, there exists a~subsequence $(f_n')$
   of~$(f_n)$ and a function $f\in\B(X)$  such that $0\le f\le 1$ and
   \begin{equation}\label{dual}
     \limn \int f_n' g\,d\sigma=\int fg\,d\sigma\qquad\mbox{ for every }g\in \mathcal L^1(\sigma).
   \end{equation}
   Let $x\in V$ and let $A$ be a Borel set in $V$ such that $\sigma(A)=0$. Then $H_V1_A(x_m)=0$
   for every $m\in\nat$, and hence $H_V(x,A)=0$, since 
   the function    $H_V1_A$ is continuous on $V$, by (\ref{ex-harmonic}).
   So, by the theorem of Radon-Nikodym, there exists   $g\in \mathcal L^1(\sigma)$
   such that $H_V(x,\cdot) = g\sigma$. By (\ref{dual}), we conclude that $\limn H_Vf_n'(x)=H_Vf(x)$.
  \end{proof} 

  \begin{corollary}\label{pointwise}
    Every sequence $(h_n)$ in $\H_1^+(U)$ has a~subsequence~$(h_n')$ which converges pointwise
    on~$U$.
  \end{corollary}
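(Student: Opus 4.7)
The plan is to apply Lemma \ref{pointwise-on-V} along a countable exhaustion of $U$, combined with a diagonal extraction. Since $X$ is locally compact with countable base, so is $U$, and hence there exists a sequence $(V_k)_{k\in \nat}$ in $\U(U)$ with $U=\bigcup_{k\in\nat} V_k$.

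The crucial point is the following: for every $V\in \U(U)$ and every $h\in \H_1^+(U)$, harmonicity of $h$ on $U$ yields $h=H_V h$ on $V$. So, applying Lemma \ref{pointwise-on-V} to the sequence $(h_n)$ (which takes values in $[0,1]$), I obtain a subsequence $(h_n')$ and a function $f\in \B(X)$ with $0\le f\le 1$ such that $H_V h_n'\to H_V f$ pointwise on $V$. Since $h_n'=H_V h_n'$ on $V$, this amounts to saying that the subsequence $(h_n')$ converges pointwise on $V$.

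A standard diagonal extraction then finishes the proof. First extract from $(h_n)$ a subsequence $(h_n^{(1)})$ converging pointwise on $V_1$; from $(h_n^{(1)})$ extract a further subsequence $(h_n^{(2)})$ converging pointwise on $V_2$; and so on. The diagonal sequence $h_n':=h_n^{(n)}$ then converges pointwise on every $V_k$, and therefore on all of $U$.

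I do not foresee any real obstacle here: the entire analytic content sits in Lemma \ref{pointwise-on-V}, and the corollary follows from routine covering and diagonalization. The only items worth a brief check are (i) the existence of the countable exhaustion $(V_k)$, which is immediate from local compactness plus second countability, and (ii) the identity $h_n'=H_V h_n'$ on $V$, which is what converts pointwise convergence of $H_V h_n'$ into pointwise convergence of $h_n'$ itself.
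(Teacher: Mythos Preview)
Your proposal is correct and follows essentially the same route as the paper: apply Lemma~\ref{pointwise-on-V} on each set $V\in\U(U)$, use $h_n=H_Vh_n$ on $V$ to turn this into pointwise convergence of the $h_n$ themselves, then cover $U$ by countably many such $V$ and diagonalize. The paper's proof is exactly this, stated more tersely.
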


  \begin{proof} If $V$ is an open set in $U$ and $\ov V$ is compact in $U$, then,
    by Proposition \ref{pointwise-on-V}, there exists a subsequence $(h_n')$ of $(h_n)$ such that
    $(H_V h_n')$ converges pointwise on $V$. This means that $(h_n')$ converges pointwise on $V$,
    since $H_Vh_n=h_n$ for every $n\in\nat$. The proof is completed choosing a covering
    of $U$ by a sequence of such sets~$V$ and using a standard diagonal procedure.
  \end{proof}

  \begin{proposition}\label{x0-result}
 If $x\in U\cap X_0$, then  $\H_s^+(U)$ is equicontinuous at $x$. 
\end{proposition}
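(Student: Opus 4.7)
The plan is to argue by contradiction, using the property $x \in X_0$ to localise the representation $h = H_V h$, and then to reduce the problem to a Meyer--Mokobodzki-style uniform convergence assertion for the strong Feller kernel $H_V$.

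Suppose the family fails to be equicontinuous at $x$: there exist $\varepsilon > 0$, $(h_n) \subset \H_1^+(U)$, and $y_n \to x$ with $|h_n(y_n) - h_n(x)| \ge \varepsilon$. By Corollary \ref{pointwise} I pass to a subsequence along which $h_n \to h$ pointwise on $U$; revisiting the proof of that corollary (on each $V \in \U(U)$ the restriction $h|_V$ equals $(H_V f)|_V$ for some $f \in \B(X)$), the limit $h$ is harmonic, and hence continuous, on $U$. Using $x \in X_0$, I pick a relatively compact open neighborhood $W$ of $x$ with $\ov W \subset U$, then $V \in \U(U)$ with $x \in V$, $\ov V \subset W$, and $H_V(x, W^c) < \varepsilon/8$. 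Since $H_V 1_{W^c}$ is harmonic --- hence continuous --- on $V$ by (\ref{ex-harmonic}), there is an open $N \subset V$ with $x \in N$ on which $H_V(\cdot, W^c) < \varepsilon/4$.

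For $y_n \in N$ (i.e., for large $n$), splitting the identity $h_n = H_V h_n$ over $W$ and $W^c$, and writing $h_n = h + g_n$ on $W$ with $g_n \to 0$ pointwise and $|g_n| \le 2$, gives
\[
h_n(y_n) - h_n(x) = \bigl[H_V(h \cdot 1_W)(y_n) - H_V(h \cdot 1_W)(x)\bigr] + \bigl[H_V(g_n \cdot 1_W)(y_n) - H_V(g_n \cdot 1_W)(x)\bigr] + R_n,
\]
where $|R_n| \le 3\varepsilon/8$. The first bracket tends to $0$ by continuity of the harmonic function $H_V(h \cdot 1_W)$ on $V$, and $H_V(g_n \cdot 1_W)(x) \to 0$ by dominated convergence applied to the finite measure $H_V(x, \cdot)$. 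The task therefore reduces to proving $H_V(g_n \cdot 1_W)(y_n) \to 0$.

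\textbf{Main obstacle.} Dominated convergence yields $H_V(g_n \cdot 1_W)(y) \to 0$ for every fixed $y \in V$; the difficulty is ruling out oscillation along the moving points $y_n \to x$. I would apply Lemma \ref{pointwise-on-V} once more to the bounded sequence $(g_n \cdot 1_W)$ to extract a subsequence with $H_V(g_n \cdot 1_W) \to H_V f'$ pointwise on $V$, forcing $H_V f' \equiv 0$ on $V$ by comparison with the dominated-convergence limit; the final and most delicate step is to upgrade this pointwise null convergence of a harmonic sequence to convergence that is uniform on a neighborhood of $x$. This upgrade, inspired by the Meyer--Mokobodzki composition result cited in the introduction, should rely on the strong Feller regularity from (\ref{ex-harmonic}) together with the weak-$\ast$ compactness in $L^\infty(\sigma)$ underpinning Lemma \ref{pointwise-on-V}, and it delivers $H_V(g_n \cdot 1_W)(y_n) \to 0$ and hence the desired contradiction.
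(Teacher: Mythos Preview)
Your overall architecture matches the paper's: argue by contradiction, pass to a pointwise convergent subsequence via Corollary~\ref{pointwise}, use $x\in X_0$ to make the contribution of $H_V$ from outside a fixed neighborhood uniformly small, and reduce to controlling $H_V$ applied to the truncated sequence. The difference is that you stop precisely at the point where the argument has content.

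Your ``main obstacle''---upgrading the pointwise convergence $H_V(g_n\cdot 1_W)\to 0$ to something uniform near $x$---is indeed the heart of the proof, and your proposed resolution does not close it. Re-applying Lemma~\ref{pointwise-on-V} to $(g_n\cdot 1_W)$ gives you nothing new: you already know $H_V(g_n\cdot 1_W)(y)\to 0$ for every fixed $y\in V$ by dominated convergence, so the extracted limit $H_Vf'$ is automatically zero, and you are back where you started. The vague appeal to ``strong Feller regularity together with weak-$\ast$ compactness'' is not a mechanism; it is a restatement of the hope. (Also, the harmonicity of the pointwise limit $h$ is neither proved by your parenthetical remark nor needed: all you use is that $H_V(h\cdot 1_W)$ is continuous on $V$, which follows directly from~(\ref{ex-harmonic}).)

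The paper's device for this step is short and worth knowing. With $g_n:=1_U h_n$ and $g:=\lim g_n$, set
\[
g_n':=\inf_{k\ge n} g_k,\qquad g_n'':=\sup_{k\ge n} g_k.
\]
Then $g_n'\uparrow g$ and $g_n''\downarrow g$, hence $H_Vg_n'\uparrow H_Vg$ and $H_Vg_n''\downarrow H_Vg$ pointwise on $V$. All of these functions are continuous on $V$ by~(\ref{ex-harmonic}), so Dini's theorem gives uniform convergence on the compact neighborhood $A$ of $x$. Since $H_Vg_n'\le H_Vg_n\le H_Vg_n''$, the sandwich forces $H_Vg_n\to H_Vg$ uniformly on $A$, and the contradiction follows from continuity of $H_Vg$ at $x$. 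This monotone-envelope/Dini trick is exactly the missing idea in your sketch.
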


\begin{proof} 
Let us suppose that $\hu$ is not equicontinuous at a point $x\in U\cap X_0$. We have to show that this
    leads to a contradiction. To that end 
   let $(A_n)$ be a sequence of compact neighborhoods of $x$
  in $U$ such that $A_n\downarrow \{x\}$ and $A_{n+1}$ is contained in the interior of $A_n$, $n\in\nat$. 
 Then there exists $\delta\in (0,1)$ such that,
  for every $n\in\nat$, there are $h_n\in \hu$ and $y_n\in A_n$ satisfying
  \begin{equation}\label{contra}
    |h_n(y_n)-h_n(x)|\ge 5 \delta.
  \end{equation}
  By Corollary \ref{pointwise}, we may assume that the sequence $(h_n)$ converges pointwise  on $U$.
 
  Since $x\in X_0$, there exists an open neighborhood $V$ of $x$
  such that $\ov V$ is compact in~$U$ and $H_V1_U (x) >1-\delta$.
  By continuity of  
  $H_V1_U $ on $V$, there exists $n_0\in\nat$ such that 
  $A:=A_{n_0}\subset V$ and
    $H_V1_U >1-\delta$ on $A$. Since $H_V1\le 1$, we obtain that 
  \begin{equation}\label{delta-est}
    H_V  1_{X\setminus U} <\delta \on A.
  \end{equation}

  We now define
    \begin{equation*}
      g_n:=1_U  h_n \mbox{ for every }n\in\nat \und  g:=\limn g_n.
      \end{equation*} 
  By (\ref{delta-est}), for every $n\in \nat$, 
\begin{equation}\label{hhg}
      |h_n-H_Vg_n|=|H_V(h_n-g_n)|\le H_V1_{X\setminus U} <\delta \on A.
    \end{equation}
     For every $n\in\nat$, let
    \begin{equation*} 
     g_n':=\inf_{k\ge n} g_k \und g_n'':=\sup_{k\ge n} g_k .
      \end{equation*} 
   $H_Vg_n'\uparrow H_V g$ and $H_Vg_n''\downarrow H_Vg$ pointwise as $n\to \infty$.
    Since these functions are continuous on $V$,  the convergence is uniform on $A$.
    Of course, $H_Vg_n'\le H_Vg_n\le H_Vg_n''$ for every $n\in\nat$.  So the sequence
    $(H_Vg_n)$ converges to $H_Vg$  uniformly on $A$. Hence there exists $n_1\ge n_0$ such that,
for every $n\ge n_1$, 
    \begin{equation}\label{hgg}
      |H_Vg_n-H_Vg|<\delta \on A. 
    \end{equation}
    Further,  by continuity of $H_Vg$ on $V$, there exists $n\ge n_1$ such that
\begin{equation}\label{hghg}
      |H_Vg-H_Vg(x)|<\delta \on A_n.
    \end{equation}
    
   Finally,   combining the estimates (\ref{hhg}),  (\ref{hgg})  and (\ref{hghg}) we obtain that
      \begin{equation*}
        |h_n-h_n(x)|< 5\delta \on A_n
      \end{equation*}
      contradicting (\ref{contra}). Thus $\hu$ is equicontinuous at $x$.
\end{proof}

To continue our proof of Theorem \ref{equi} (and for later use) we define
\begin{equation*}
       \W_U:= {}^\ast\H^+(U)|_U
     \end{equation*}
     and observe 
     that $(U,\W_U)$ is a balayage space 
     (see \cite[V.1.1]{BH}). 
     
     Let us now consider a point  $x\in X\setminus X_0$.
     By \cite[III.2.7]{BH}, it  is finely isolated. 
Since $\W|_U \subset \W_U$, 
it is also finely isolated  with respect to $(U,\W_U)$. Therefore 
\begin{equation*}
  q_x:=\inf\{w\in \W_U\colon w(x)\ge 1\}
  \end{equation*} 
  is a continuous real potential for $(U,\W_U)$ with $C(q_x)=\{x\}$  
  (see  \cite[p.\,94 and III.2.8]{BH} or ~\cite[Lemma 4.2.13]{H-course}).

 \begin{lemma}\label{qx} The set $\hu$ is equicontinuous at every point $x\in U\setminus X_0$.
 \end{lemma}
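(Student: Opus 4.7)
The plan is to use the potential $q_x$ on $(U, \W_U)$, whose carrier is $\{x\}$, as a universal modulus of continuity at $x$. The key observation is that for every $v \in \W_U$ with $v(x) \ge c \ge 0$, the definition $q_x = \inf\{w \in \W_U \colon w(x) \ge 1\}$ immediately yields $v \ge c\, q_x$ on $U$: if $c > 0$, then $v/c \in \W_U$ competes in the infimum defining $q_x$, so $v/c \ge q_x$; if $c = 0$, the inequality is trivial.

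First I would pin down $q_x$: since $1 \in \W \subset \W_U$ competes in the defining infimum, one has $q_x \le 1$ on $U$, and since every competitor has value $\ge 1$ at $x$, one also has $q_x(x) = 1$. Next, for a given $h \in \hu$, I would check that both $h|_U$ and $(1-h)|_U$ belong to $\W_U = {}^\ast\H^+(U)|_U$: the first because $h \in \H^+(U) \subset {}^\ast\H^+(U)$; the second because $1 \in {}^\ast\H^+(U)$, $-h \in {}^\ast\H(U)$ (as $h \in \H(U)$), and $1 - h \ge 0$. Applying the key observation twice, once to $h$ (with $c = h(x)$) and once to $1-h$ (with $c = 1 - h(x)$), I then obtain
\[
  h(x)\, q_x \;\le\; h \;\le\; 1 - (1 - h(x))\, q_x \qquad \text{on } U.
\]

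Subtracting $h(x)$ and using $0 \le h(x) \le 1$ together with $q_x \le 1$, this collapses to $|h(y) - h(x)| \le 1 - q_x(y)$ for every $y \in U$ and every $h \in \hu$. Continuity of $q_x$ at $x$ with $q_x(x) = 1$ then yields equicontinuity of $\hu$ at $x$. The step I expect to need the most care is confirming that $(1-h)|_U \in \W_U$: this uses the standing normalization $1 \in \W$ together with the fact that $-h \in {}^\ast\H(U)$ because $h$ is harmonic, and not merely superharmonic, on $U$. Once this and the analogous membership of $h|_U$ in $\W_U$ are in place, the two comparisons with $q_x$ are immediate from its definition as an infimum, and the rest of the argument is a one-line estimate.
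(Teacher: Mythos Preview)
Your proof is correct and follows essentially the same approach as the paper's: both derive the two-sided comparison $h(x)\,q_x \le h$ and $(1-h(x))\,q_x \le 1-h$ from the fact that $h|_U$ and $(1-h)|_U$ lie in $\W_U$, and then conclude using continuity of $q_x$ at $x$ with $q_x(x)=1$. The only cosmetic difference is that you appeal directly to the defining infimum of $q_x$ (and extract the clean uniform bound $|h(y)-h(x)|\le 1-q_x(y)$), whereas the paper phrases the same comparison via the domination principle~(\ref{dom}) applied to $(U,\W_U)$ and works with an $\varepsilon$--$\delta$ formulation.
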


 \begin{proof} 
   Given $\delta>0$,  there exists a neighborhood $V$ of $x$ in $U$ such that, for all $y\in V$,
   \begin{equation}\label{qx-continuous}
    q_x >1-\delta   \on V.  
   \end{equation}
   We  now fix $h\in \hu$. Then the restrictions $v,w$ of $h, 1-h$, respectively,  on $U$ are hyperharmonic
   on $U$. 
   Applying (\ref{dom})
  to the balayage space $(U,\W_U)$ we get~that
   \begin{equation*}
     v\ge h(x) q_x \und w\ge (1-h(x))q_x.
   \end{equation*}
 Since $0\le h(x)\le 1$, this implies that,  for every $y\in V$, by (\ref{qx-continuous}),    
   \begin{equation*}
     h(y)> h(x) - \delta  \und
   1-h(y)> 1-h(x)- \delta, 
   \end{equation*}
  that is,  $\delta >h(x)-h(y)>- \delta$. 
   \end{proof} 

   Having Proposition \ref{x0-result} and Lemma \ref{qx}  the proof of Theorem \ref{equi} is completed.

    \begin{remark} {\rm
        For  the sake of completeness let us finally note that $X\setminus X_0$ is the set of \emph{all } finely
        isolated points in $X$ and that it is (at most) countable. If $X$ is a (countable) discrete space,
        then, of course, $X_0=\emptyset$. In \cite{H-limits} it is shown that, for   $X=(0,1)$, the set
        $X\setminus X_0$ can be \emph{any} given countable subset of $X$.
        }
        \end{remark}

\section{Equicontinuity of specific minorants of   $p\in \px$}\label{specific}

Let $\prec$ denote the \emph{specific order} on $\W$, that is, if $u,v\in \W$,
then $u\prec v$ if there exists $w\in \W$ such that $u+w=v$. If $q\in \px$
and $f\in \B_b(X)$ such that $0\le f\le 1$, then $K_qf\prec q$, since
$K_q(1-f)\in \px$. If $q,q' \in\px$,
then obviously $K_{q+q' }=K_{q}+K_{q' }$, and hence $K_{q}f\prec K_{q+q' }f$
  for every $f\in \B^+(X)$. For $q\in \px$ and Borel sets~$A$ in~$X$, let
  \begin{equation*}
    q_A:=K_q1_A.
    \end{equation*}

    Having Theorem \ref{equi} the proof given in \cite{H-quasi-balayage} for the following
    result  is complete. However, for the convenience of  the reader we add a quick presentation.

  \begin{proposition}\label{spec}
    For every $p\in \px$, the set $\M_p:=\{q\in \px\colon q\prec p\}$
    is {\rm(}locally{\rm)} equicontinuous on $X$.
     \end{proposition}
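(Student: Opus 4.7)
My plan is to handle points of $U \cap X_0$ and of $U \setminus X_0$ separately, mirroring the dichotomy used for Theorem~\ref{equi}, and to reduce the first case directly to Theorem~\ref{equi}.

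As a preliminary, I would observe that $\M_p$ is pointwise dominated by $p$ and that the potential kernels compare: if $q \prec p$, then $w := p - q \in \W \cap \C(X)$ and actually $w \in \px$, since any positive harmonic minorant of $w$ is also a minorant of $p \in \px$ and hence vanishes by (\ref{char-px}). Consequently $K_p = K_q + K_w$, so $K_q f \le K_p f$ for every $f \in \B^+(X)$.

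For $x_0 \in U \cap X_0$, the key auxiliary fact is that $K_p(x_0,\{x_0\}) = 0$: otherwise $K_p 1_{\{x_0\}}$ would be a nonzero element of $\px$ with carrier $\{x_0\}$, forcing $\{x_0\}$ to be finely open and contradicting $x_0 \in X_0$. Given $\varepsilon > 0$, I would then pick a relatively compact open neighborhood $V$ of $x_0$ with $K_p 1_V(x_0) < \varepsilon$ and, by continuity of $K_p 1_V \in \px$, a smaller open neighborhood $W \subset V$ on which $K_p 1_V < 2\varepsilon$. For every $q \in \M_p$, in the decomposition
\[
  q = K_q 1_V + K_q 1_{V^c},
\]
the first summand is pointwise dominated by $K_p 1_V$ and hence has oscillation at most $3\varepsilon$ on $W$ uniformly in $q$, while the second summand has carrier in $V^c$, is therefore harmonic on $V$, and is bounded by $p \in \W \cap \C(X)$. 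Theorem~\ref{equi} applied to $\H_p^+(V)$ then yields equicontinuity of $\{K_q 1_{V^c} : q \in \M_p\}$ at $x_0$, and combining the two controls gives equicontinuity of $\M_p$ there.

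For $x_0 \in U \setminus X_0$ I would transplant the argument of Lemma~\ref{qx}. Fix a relatively compact open neighborhood $U$ of $x_0$ and let $q_{x_0} \in \W_U$ be the continuous real potential for $(U, \W_U)$ with carrier $\{x_0\}$ normalized so that $q_{x_0}(x_0) = 1$. Given $\delta > 0$, pick a neighborhood $V \subset U$ of $x_0$ on which $q_{x_0} > 1 - \delta$ and $|p - p(x_0)| < \delta$. For any $q \in \M_p$, both $q|_U$ and $(p - q)|_U$ lie in $\W_U$, and minimality of $q_{x_0}$ at $x_0$ gives
\[
  q|_U \ge q(x_0)\, q_{x_0} \und (p - q)|_U \ge (p(x_0) - q(x_0))\, q_{x_0}.
\]
Evaluating both inequalities on $V$ and combining with the continuity bound on $p$ produces $|q(y) - q(x_0)| \le (1 + p(x_0))\,\delta$ for every $y \in V$, uniformly in $q \in \M_p$.

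The main obstacle is the first case: isolating $K_p(x_0,\{x_0\}) = 0$ as the crucial ingredient and coordinating the two neighborhoods $W \subset V$ so that both summands are simultaneously small in oscillation. Once Lemma~\ref{qx}'s template is available, the finely isolated case reduces to a short computation.
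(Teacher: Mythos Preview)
Your proof is correct. The paper's argument, however, avoids the dichotomy $X_0$ versus $X\setminus X_0$ by means of a single three-term decomposition $q=q_{X\setminus U}+q_{U\setminus\{x\}}+q_{\{x\}}$ valid at every point $x$: the first summand is harmonic near $x$ and controlled by Theorem~\ref{equi}; the second is dominated by $p_{U\setminus\{x\}}$, whose value at $x$ tends to $0$ as $U\downarrow\{x\}$ regardless of whether $x\in X_0$; and for the third the paper observes, via~(\ref{dom}), that any potential with carrier contained in $\{x\}$ is a scalar multiple of $p_{\{x\}}$, so that $q_{\{x\}}=\alpha\, p_{\{x\}}$ with $\alpha\in[0,1]$, reducing matters to the continuity of the single function $p_{\{x\}}$. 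Your Case~1 is precisely this argument with the third term dropped (legitimate because $p_{\{x_0\}}=0$ for $x_0\in X_0$), and your Case~2 recovers the proportionality fact by a different route through $(U,\W_U)$ and $q_{x_0}$, in the style of Lemma~\ref{qx}. The paper's version is shorter and uniform in $x$; yours separates the two regimes more explicitly and makes the role of $X_0$ visible. One small point: your one-line justification that $K_p1_{\{x_0\}}\ne 0$ would force $\{x_0\}$ to be finely open is correct but terse --- it rests on the equivalence between ``$x_0$ not finely isolated'' and ``$\{x_0\}$ totally thin'', which the paper also invokes (in its parenthetical ``if $\{x\}$ is totally thin, then $p_{\{x\}}=0$'') without further comment.
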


     \begin{proof} Let $x\in X$ and $\delta>0$. There exists an open neighborhood~$U$
       of $x$ such that $p_{U\setminus \{x\}}(x)<\delta$,  and hence $p_{U\setminus \{x\}}<\delta$
         on some neighborhood $V$ of $x$. Moreover, we may assume that $ 
         |p_{\{x\}}-p_{\{x\}}(x)|<\delta$ on $V$ (if $\{x\}$ is totally thin, then $p_{\{x\}}=0$).  
         By Proposition \ref{equi}, there exists a~neighborhood~$W$
         of $x$ in $V$ such that, for every $q\in \M_p$, $|q_{X\setminus U}-q_{X\setminus U}(x)|<\delta$ on $W$.
        
         Now let us fix $q\in \M_p$. Then $q_{U\setminus \{x\}}\prec  p_{U\setminus \{x\}}$
        and  $ q_{\{x\}}\prec  p_{\{x\}}$. By (\ref{dom}), $ q_{\{x\}}=\a  p_{\{x\}}$ with $\a\in[0,1]$. Thus
        $|q-q(x)| < 3\delta$ on $W$.
  \end{proof}
  
  \begin{corollary}\label{Kp-equi}
    For every $p\in\px$, the set $\{K_pf\colon f\in \B(X), \, 0\le f\le 1\}$ is
    {\rm(}locally{\rm)} equicontinuous on $X$.
  \end{corollary}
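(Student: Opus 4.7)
The plan is to derive this corollary directly from Proposition \ref{spec}, by showing that the family $\{K_pf\colon f\in\B(X),\, 0\le f\le 1\}$ is actually contained in $\M_p=\{q\in\px\colon q\prec p\}$. Once that inclusion is established, the asserted equicontinuity is just the equicontinuity of $\M_p$ restricted to this subfamily.

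First I would fix $f\in\B(X)$ with $0\le f\le 1$. Then both $f$ and $1-f$ belong to $\B_b^+(X)$, so by the defining properties of the potential kernel recalled in the introduction, the functions $K_pf$ and $K_p(1-f)$ both lie in $\px$. Since $K_p$ is a kernel and hence additive in its function argument, we have
\begin{equation*}
  K_pf + K_p(1-f) = K_p 1 = p,
\end{equation*}
with both summands in $\W$. By the definition of the specific order $\prec$ recalled at the beginning of Section \ref{specific}, this gives $K_pf\prec p$, i.e.\ $K_pf\in\M_p$.

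Thus $\{K_pf\colon f\in\B(X),\,0\le f\le 1\}\subset\M_p$, and Proposition \ref{spec} yields the claimed (local) equicontinuity on $X$. There is essentially no obstacle here: the real work is contained in Proposition \ref{spec} (and, through it, in Theorem \ref{equi}); the content of the corollary is just the elementary observation that $K_pf$ and its complementary potential $K_p(1-f)$ together reconstitute $p$, which is precisely what the specific-order relation $K_pf\prec p$ requires.
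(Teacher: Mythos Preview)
Your proof is correct and is exactly the argument the paper has in mind: the observation that $K_pf\prec p$ because $K_p(1-f)\in\px$ is stated verbatim at the beginning of Section~\ref{specific}, so the corollary follows immediately from Proposition~\ref{spec} via the inclusion $\{K_pf\colon 0\le f\le 1\}\subset\M_p$.
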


  At first sight, Proposition \ref{spec} may look stronger than Corollary \ref{Kp-equi}.
  However, it is not, since, for every $q\prec p$, there exists a function $f\in\B(X)$
  such that $0\le f\le 1$ and $K_pf=q$; see \cite[II.7.11]{BH}.

  \section{Compactness of potential kernels}\label{criteria}

  Let us introduce the following boundedness property for $(X,\W)$ (cf.\  Remark \ref{compact-general}):
  \begin{itemize}
  \item[\rm (B)] There is a strictly positive bounded function $w_0\in \W$.
\end{itemize}

We  first recall the statement of \cite[Lemma 3.1]{H-modification} and prove it using Corollary \ref{Kp-equi}.

\begin{proposition}\label{KC-compact} Suppose {\rm (B)} 
  and let  $p\in\px$ such that  $C(p)$ is compact. Then $K_p$ is a~compact operator on~$(B_b(X), \|\cdot\|_\infty)$.    
\end{proposition}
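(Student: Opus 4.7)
The plan is to extract, from any bounded sequence $(f_n)$ in $\B_b(X)$, a uniformly convergent subsequence of $(K_p f_n)$, by combining the equicontinuity in Corollary~\ref{Kp-equi} with an Arzela--Ascoli argument. Splitting $f_n=f_n^+-f_n^-$ and using linearity of $K_p$, it suffices to handle the case $0\le f_n\le 1$; then $|K_p f_n|\le K_p 1 = p$ for every~$n$.

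By Corollary~\ref{Kp-equi}, $(K_p f_n)$ is locally equicontinuous on~$X$. Since $X$ is locally compact with countable base, it is $\sigma$-compact; applying Arzela--Ascoli on an exhausting sequence of compact subsets together with a standard diagonal extraction yields a subsequence, still denoted $(K_p f_n)$, converging uniformly on every compact subset of~$X$.

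To upgrade the convergence to all of $X$, I would establish that $p\in\C_0(X)$. Given this, for $\ve>0$ pick a compact $K\subset X$ with $p<\ve/4$ on $X\setminus K$; then $|K_p f_m - K_p f_n|\le 2p<\ve/2$ on $X\setminus K$ for all $m,n$, and combined with uniform convergence on $K$ this makes $(K_p f_n)$ uniformly Cauchy on $X$. For the $\C_0$-claim, I would use~(B) together with~(C) to build $\tilde w_0\in\W\cap\C(X)$ bounded and strictly positive (as a uniformly convergent countable weighted sum of continuous minorants of the $w_0$ furnished by~(B), using second countability of~$X$); rescaling by $\tilde w_0$, we may assume $1\in\W$. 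Compactness of $C(p)$ and continuity of $p$ give $M:=\sup_{C(p)} p<\infty$, and since $R_1^{C(p)}\ge 1$ on $C(p)$ the balayage identity $p=R_p^{C(p)}$ yields $p\le M\,R_1^{C(p)}$; the transience encoded by~(T) then forces the equilibrium potential $R_1^{C(p)}$ of the compact set $C(p)$ to lie in $\C_0(X)$, whence $p\in\C_0(X)$.

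The main obstacle is precisely this last $\C_0$-statement: boundedness of~$p$ is immediate from compactness of $C(p)$ and continuity, but the passage to vanishing at infinity in the abstract balayage-space framework relies essentially on both~(B), to reduce to the regime $1\in\W$, and the transience built into~(T); without these a continuous potential with compact carrier need not belong to $\C_0(X)$.
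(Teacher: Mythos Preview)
Your route---local uniform convergence via Arzel\`a--Ascoli on an exhaustion of~$X$, then a tail estimate coming from $p\in\C_0(X)$---is different from the paper's and hinges entirely on the $\C_0$-claim you yourself flag as the main obstacle. That obstacle is genuine: property~(T) only furnishes \emph{some} pair $u,v\in\W\cap\C(X)$ with $u/v\in\C_0(X)$, and there is no direct passage from this to $R_1^{C(p)}\in\C_0(X)$. Your reduction to $1\in\W$ via a continuous bounded $\tilde w_0$ assembled from~(B) and~(C) is also only sketched, and after rescaling the potential becomes $p/\tilde w_0$, so the target statement shifts too. Even if the $\C_0$-claim is ultimately true in this generality, establishing it is not lighter work than the proposition itself.

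The paper bypasses all of this. It applies Arzel\`a--Ascoli only on the single compact set $C(p)$, extracting a subsequence $(q_n)$ of $(K_pf_n)$ that is uniformly Cauchy there. The extension to all of~$X$ uses nothing but the domination principle~(\ref{dom}) and the bounded strictly positive $w_0$ from~(B): if $q_m<q_n+\eta$ on $C(p)$ and $a:=\inf_{C(p)}w_0>0$, then $q_m<q_n+(\eta/a)\,w_0$ on $C(p)\supset C(q_m)$, hence $q_m\le q_n+(\eta/a)\,w_0$ on~$X$ by~(\ref{dom}); since $w_0$ is bounded, $(\eta/a)\,w_0$ is uniformly small when $\eta$ is. Thus uniform Cauchy on~$C(p)$ already gives uniform Cauchy on~$X$, with no $\C_0$-statement and no exhaustion needed.
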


\begin{proof} Let $(f_n)$ be a bounded sequence in $B_b(X)$, without loss of generality $0\le f_n\le 1$
  for every $n\in\nat$. By Corollary \ref{Kp-equi} and the theorem of Arzel\`a-Ascoli,
  there exists a subsequence $(q_n)$ of $(Kf_n)$ which is uniformly convergent on $C(p)$.

  Let  $\delta>0$, $a:=\inf w_0(C(p))$ and  $b:=\sup w_0(C(p))$. There exists $k\in\nat$
  such that, for all $m,n\ge k$,
  \begin{equation*}
    q_m < q_n+ (\delta/b) a 
    \on C(p), 
  \end{equation*}
  where $C(q_m)\subset C(p)$, and  therefore  $q_m\le q_n+(\delta/b)w_0\le q_n+\delta$ on $X$, by   (\ref{dom}).
  So~the sequence $(q_n)$ is uniformly convergent.
  \end{proof} 

  Given $g\in \B_b(X)$, let us denote the operator $f\mapsto fg$ on $B_b(X)$ by $M_g$.
Clearly, for all $g\in \B_b^+(X)$ and  $p\in\px$,  the potential kernel for $K_pg$ is  $K_pM_g$.

\begin{corollary}\label{ex-vp}
  Suppose {\rm (B)} 
    and let  $p\in\px$. Then there exists  a~function  $\vp_0\in \C(X)$, $0<\vp_0\le 1$, such that  the potential kernel of
    $K_p\vp_0$ is a~compact operator on~$(B_b(X), \|\cdot\|_\infty)$.
 \end{corollary}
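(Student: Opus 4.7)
The potential kernel of the potential $K_p\vp_0$ is $K_pM_{\vp_0}$, as noted just before Proposition~\ref{KC-compact}. My task is therefore to choose $\vp_0\in\C(X)$ with $0<\vp_0\le 1$ such that $K_pM_{\vp_0}$ is compact on $(B_b(X),\|\cdot\|_\infty)$. Because $\vp_0>0$ on all of $X$ forces $\supp\vp_0=X$, the carrier $C(K_p\vp_0)$ need not be compact, so Proposition~\ref{KC-compact} cannot be applied directly to the potential $K_p\vp_0$. Instead I will arrange $q:=K_p\vp_0\in\C_0(X)$ and obtain compactness from Proposition~\ref{spec} by an Arzel\`a--Ascoli argument in which ``$q\in\C_0(X)$'' plays the r\^ole that compactness of $C(p)$ plays in the proof of Proposition~\ref{KC-compact}.

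\textbf{Compactness given $K_p\vp_0\in\C_0(X)$.} For any sequence $(f_n)\subset B_b(X)$ with $|f_n|\le 1$, decompose $K_p(\vp_0f_n)=K_p(\vp_0f_n^+)-K_p(\vp_0f_n^-)$. Both summands are specific minorants of $q$ (since $K_p(\vp_0f_n^\pm)+K_p(\vp_0(1-f_n^\pm))=q$ with the second term in $\W$), so by Proposition~\ref{spec} the family is locally equicontinuous on~$X$. A diagonal Arzel\`a--Ascoli extraction along a compact exhaustion of~$X$ produces a subsequence converging uniformly on every compact set; since $|K_p(\vp_0f_n^\pm)|\le q\in\C_0(X)$, this family is uniformly small outside a sufficiently large compact set, so the convergence is actually uniform on all of~$X$. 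Hence $K_pM_{\vp_0}$ is compact.

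\textbf{Construction of $\vp_0$.} Using (B) together with (C), fix a bounded strictly positive $w_0\in\W\cap\C(X)$. Choose a compact exhaustion $(K_n)_{n\ge 1}$ of $X$ with $K_n\subset\mathrm{int}(K_{n+1})$ and a continuous partition of unity $(\chi_n)$ subordinated to the cover by $\mathrm{int}(K_{n+1})\setminus K_{n-1}$. Each $K_p\chi_n\in\px$ has compact superharmonic support inside $\supp\chi_n$, and by (\ref{dom}) applied with $w_0$ one gets $K_p\chi_n\le\gamma_nw_0$ for some $\gamma_n<\infty$, so $M_n:=\|K_p\chi_n\|_\infty<\infty$. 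Choose $\epsilon_n\in(0,1]$ with $\epsilon_nM_n\le 2^{-n}$ and set $\vp_0:=\sum_{n\ge 1}\epsilon_n\chi_n\in\C(X)$; then $0<\vp_0\le 1$, and $K_p\vp_0=\sum_n\epsilon_nK_p\chi_n$ converges uniformly, so $K_p\vp_0$ is continuous and bounded.

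\textbf{Main obstacle.} The delicate point is to upgrade boundedness of $K_p\vp_0$ to $K_p\vp_0\in\C_0(X)$: each individual $K_p\chi_n$ is harmonic outside its compact support and so need not itself vanish at infinity. The key additional input is the definition of $\px$, which furnishes a strictly positive $w\in\W\cap\C(X)$ with $p/w\in\C_0(X)$. Since $\vp_0\le 1$ gives $K_p\vp_0\le p$, we have $K_p\vp_0/w\le p/w\in\C_0(X)$; combining this relative vanishing with the uniform bound by $w_0$ provided by (B) and refining the weights $\epsilon_n$ (so as to exploit the rate at which $p/w$ vanishes on each annulus $K_{n+1}\setminus K_{n-1}$) yields $K_p\vp_0\in\C_0(X)$, completing the proof.
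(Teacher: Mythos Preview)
Your construction of $\vp_0$ is essentially the paper's: write $\vp_0=\sum_n\epsilon_n\chi_n$ with the $\chi_n$ compactly supported and weights chosen so that $\sum_n\epsilon_n K_p\chi_n$ converges uniformly. The paper then finishes in one line: each $K_pM_{\chi_n}$ is compact by Proposition~\ref{KC-compact} (since $C(K_p\chi_n)\subset\supp\chi_n$ is compact), and
\[
K_pM_{\vp_0}=\sum_{n\ge1}\epsilon_n\,K_pM_{\chi_n}
\]
converges in operator norm because $\|\epsilon_nK_pM_{\chi_n}\|=\epsilon_n\|K_p\chi_n\|_\infty\le 2^{-n}$; a norm limit of compact operators is compact. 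Your Arzel\`a--Ascoli argument in the first paragraph would be a correct (if longer) substitute---\emph{provided} $q=K_p\vp_0\in\C_0(X)$.

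The genuine gap is your ``Main obstacle'' paragraph: the claim that the weights can be refined so that $K_p\vp_0\in\C_0(X)$ is not merely unproved but false in general. Already a single term $\epsilon_1K_p\chi_1$ may fail to lie in $\C_0(X)$, and since all terms are non-negative no later weights can compensate. Concretely, take $X=\{0,1,2,\dots\}$ with the sub-Markov chain that from any $n\ge1$ jumps to $0$ and is killed at $0$; then $V_0f(0)=f(0)$ and $V_0f(n)=f(n)+f(0)$ for $n\ge1$, and $(X,\E_{\mathbb P})$ is a balayage space with $1\in\W$, so (B) holds. For $p:=V_01\in\px$ one has $K_p=V_0$, and for \emph{every} continuous $\vp_0>0$,
\[
K_p\vp_0(n)=\vp_0(n)+\vp_0(0)\ge\vp_0(0)>0\qquad(n\ge1),
\]
hence $K_p\vp_0\notin\C_0(X)$. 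Nevertheless the corollary holds here (e.g.\ $\vp_0(n)=2^{-n}$ gives a compact $K_pM_{\vp_0}$ via the series argument). The relative decay $p/w\in\C_0(X)$ you invoke only controls $K_p\vp_0/w$, not $K_p\vp_0$, and when $w$ is unbounded this is insufficient. The fix is to drop the detour through $\C_0(X)$ and use operator-norm convergence of $\sum_n\epsilon_nK_pM_{\chi_n}$ directly.
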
  

 \begin{proof} Let  us choose $\vp_n\in \C(X)$ with compact support, $0\le \vp_n\le 1$, such that
   $\bigcup_{n\in\nat} \{\vp_n>0\}=X$. 
   For every $n\in\nat$,  $p_n:=K_p\vp_n\in \mathcal P(X)$ with  $C(p_n)\subset \supp(\vp_n)$, and hence 
   $K_{p_n}$ is a compact operator on $(\bbx,\|\cdot\|_\infty)$, by Theorem \ref{KC-compact}. 
   Let $0<\a_n\le 2^{-n}$, $n\in\nat$,  such that $\a_n p_n\le 2^{-n}$.
   Then $\vp_0:=\sum_{n=1}^\infty \a_n\vp_n\in\C(X)$,
$0<\vp_0\le 1$, $p_0:=K_p\vp_0=\sum_{n=1}^\infty \a_n p_n\in \mathcal P_b(X)$ and
$K_{p_0}=\sum_{n=1}^\infty \a_n K_{p_n}$ is a~compact operator on~$(\bbx, \|\cdot\|_\infty)$.
    \end{proof}

Fixing an exhaustion   of $X$ by relatively compact open sets $U_n$, $n\in\nat$,  we have the following.

 \begin{theorem}\label{compact}
   
   Assuming {\rm (B)} the following  
   are equivalent for every $p\in \px$:
\begin{itemize}
\item[\rm (1)]
$K_p$ is a~compact operator on  $(\B_b(X),\|\cdot\|_\infty)$  
{\rm(}and  $K_p(\B_b(X)) \subset \C_b(X)${\rm)}. 
\item[\rm (2)]  
  $\limn \|K_p1_{X\setminus U_n}\|_\infty=0$.\footnote{If $1\in\W$, then
    $\|K_p1_{X\setminus U_n}\|_\infty=\sup\{ K_p1_{X\setminus U_n}(x)\colon x\in X\setminus U_n\}$,
    by (\ref{dom}).} 
\end{itemize} 
\end{theorem}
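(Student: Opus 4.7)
The core of the theorem is the direction (2)$\Rightarrow$(1), which I will prove by approximating $K_p$ in operator norm by potential kernels of the cutoffs $q_n := K_p 1_{U_n}$, each of which has compact superharmonic support contained in $\overline{U_n}$, and then applying Proposition~\ref{KC-compact}. The reverse direction (1)$\Rightarrow$(2) will be a routine combination of pointwise monotone convergence with the Arzel\`a--Ascoli-type compactness that is already packaged into~(1).

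For (2)$\Rightarrow$(1), I will use the decomposition
$$
K_p \;=\; K_p M_{1_{U_n}} \,+\, K_p M_{1_{X \setminus U_n}},
$$
which immediately yields the operator-norm bound
$\|K_p - K_p M_{1_{U_n}}\|_{\mathrm{op}} \le \|K_p 1_{X \setminus U_n}\|_\infty$, tending to $0$ by assumption. As recorded at the start of Section~\ref{specific}, $K_p M_{1_{U_n}}$ is precisely the potential kernel $K_{q_n}$ of $q_n = K_p 1_{U_n}$; since $C(q_n) \subset \overline{U_n}$ is compact, and since (B) together with (\ref{dom}) forces $q_n \le \bigl(\sup_{\overline{U_n}} q_n/w_0\bigr)\, w_0 \in \B_b(X)$, Proposition~\ref{KC-compact} applies and each $K_{q_n}$ is compact on $(\B_b(X),\|\cdot\|_\infty)$. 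Thus $K_p$ is a uniform limit of compact operators, hence compact. Moreover each $K_{q_n} f$ is continuous (for $f \in \B_b^+$ one has $K_{q_n} f \in \px \subset \C(X)$; extend by linearity), so the uniform convergence $K_{q_n} f \to K_p f$ gives $K_p f \in \C_b(X)$.

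For (1)$\Rightarrow$(2), I first note that $1_{X \setminus U_n} \downarrow 0$ pointwise, so monotone convergence applied to the kernel $K_p(x, \cdot)$ yields $K_p 1_{X \setminus U_n}(x) \downarrow 0$ for every $x \in X$. Compactness of $K_p$ together with $K_p(\B_b(X)) \subset \C_b(X)$ makes $(K_p 1_{X \setminus U_n})_n$ relatively compact in $(\C_b(X), \|\cdot\|_\infty)$, so any subsequence has a sub-subsequence converging uniformly; its uniform limit must coincide with the pointwise limit $0$. Combined with the obvious monotonicity of the real sequence $\|K_p 1_{X \setminus U_n}\|_\infty$, this yields~(2).

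The one step that requires a brief verification is the boundedness of $q_n$ needed to invoke Proposition~\ref{KC-compact}; this is the only place where assumption~(B) is used after the reduction, and it is immediate from the domination principle as indicated above. Everything else reduces to tracking norms of positive kernels, so no essential obstacle remains.
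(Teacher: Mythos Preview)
Your proof is correct and follows essentially the same route as the paper: the decomposition $K_p = K_pM_{1_{U_n}} + K_pM_{1_{X\setminus U_n}}$, the identification $K_pM_{1_{U_n}} = K_{q_n}$ with $C(q_n)\subset\overline{U_n}$ compact, and the appeal to Proposition~\ref{KC-compact} are exactly what the paper does for (2)$\Rightarrow$(1), and your (1)$\Rightarrow$(2) via pointwise monotone convergence plus extraction of a uniformly convergent subsequence is also the paper's argument. Your added remark that $q_n$ is bounded (via (B) and the domination principle~(\ref{dom})) is correct but not needed as a separate step, since Proposition~\ref{KC-compact} already assumes only (B) and compactness of the carrier; the boundedness is implicit there.
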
 

\begin{proof}
  (1)\,$\Rightarrow$\,(2) 
  Almost trivial: Obviously, $K_p1_{X\setminus U_n}\downarrow 0$ pointwise as $n\to\infty$.
  By~compactness of $K_p$, the sequence $(K_p1_{X\setminus U_n})$
contains a uniformly convergent subsequence. Thus  (2) holds.

(2)\,$\Rightarrow$\,(1)
For every $n\in\nat$, $p_n:=K_p1_{U_n}\in \mathcal P(X)$ and $C(p_n)\subset \ov U_n$.
Hence $K_{p_n}$ is a~compact operator on $\B_b(X)$, by Proposition \ref{KC-compact},
and $K_{p_n}(\B_b(X))\subset \C_b(X)$.
Since
\begin{equation*}
         K_p=K_pM_{1_{U_n}}+K_pM_{1_{X\setminus U_n}}, \qquad n\in\nat,
       \end{equation*}
where $K_pM_{1_{U_n}}=K_{p_n}$ and $\|K_pM_{1_{X\setminus U_n}}\|_\infty=\|K_p1_{X\setminus U_n}\|_\infty$,
we obtain that $K_p$ is a~compact operator on~$\B_b(X)$ and
 $K_p(\B_b(X))\subset C_b(X)$. 
\end{proof}

\begin{remark}\label{compact-general} {\rm
    To apply Theorem \ref{compact} 
    in the general case, we~may choose $s_0\in\W\cap \C(X)$, $s_0>0$, and consider the balayage space
    $(X,\widetilde \W)$ with~$\widetilde \W:=(1/s_0) \W$ and $1\in \widetilde \W$.
    Given $p\in \px$, the function $\tilde p:=p/s_0$ is a~continuous real
    potential for $(X,\widetilde \W)$ and the associated potential kernel $\widetilde K_{\tilde p}$ is given by
    \begin{equation*}
      \widetilde K_{\tilde p} f=(1/s_0) K_p f, \qquad f\in \B^+(X).
    \end{equation*}
    Then compactness of $\widetilde K_{\tilde p}$ on $(\B_b(X), \|\cdot\|_\infty)$ implies compactness
    of~$K_p$ on the space of all $s_0$-bounded functions equipped with the norm
    \begin{equation*}
      \|f\|:=\inf \{a\ge 0\colon |f|\le a s_0\}.
    \end{equation*}
  }
  \end{remark}

  In the following let $U$ be an open set in $X$. We recall from the preceding section
  that taking $\W_U:= {}^\ast\H^+(U)|_U$ we obtain a balayage space $(U,\W_U)$.
  By  \cite[V.1.1]{BH}, we know, in addition,  that   $ q-H_Uq\in \mathcal P(U)$ for every $q\in\px$.

\begin{corollary}\label{final}
Let $U$ be relatively compact. Then the  following hold:
  \begin{itemize}
    \item[\rm (a)]
    If $q\in \px$ and $p:=q-H_Uq$, then $K_p=K_q-H_UK_q$, and $K_p$  is a compact operator on $\B_b(U)$. 
  \item[\rm (b)] If $U$ is regular, then $K_p$ is a compact operator on $\B_b(U)$
    if and only if $p\in \C_0(U)$ {\rm(}and $K_p(\B_b(U))\subset \C_0(U)${\rm)}.
    \end{itemize} 
    \end{corollary}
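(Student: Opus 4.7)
For part (a), I will identify $K_p$ via the uniqueness of potential kernels on the balayage space $(U,\W_U)$. Define $Lf:=(K_q\tilde f-H_UK_q\tilde f)|_U$ for $f\in\B_b^+(U)$, where $\tilde f$ denotes the extension of $f$ by zero to $X$. The key observation is that $K_q1_{X\sms U}$ has carrier in $X\sms U$ and is therefore harmonic on $U$, so $H_UK_q1_{X\sms U}=K_q1_{X\sms U}$ on $U$; splitting $q=K_q1_U+K_q1_{X\sms U}$ and applying $H_U$ then yields $L1_U=q-H_Uq=p$. The properties $Lf\in\mathcal P(U)$ and $C(Lf)\subset\supp(f)$ will follow from \cite[V.1.1]{BH} applied with the potential $K_q\tilde f\in\px$ in place of $q$, combined with the harmonicity of $K_q\tilde f$ off $\supp(\tilde f)$. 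Uniqueness of the potential kernel then forces $L=K_p$.

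For the compactness of $K_p$ on $\bbu$, I would reduce via Remark \ref{compact-general} to the case $1\in\W$. For a sequence $(f_n)\subset\bbu$ with $\|f_n\|_\infty\le 1$, Corollary \ref{Kp-equi} yields local equicontinuity of $\{K_q\tilde f_n\}$ on $X$, while the bound $K_q\tilde f_n\le q$ gives uniform boundedness on the compact set $\ov U$. Arzel\`a--Ascoli then extracts a subsequence converging uniformly on $\ov U$; since $1\in\W$, $H_U$ is a sup-norm contraction, so $(H_UK_q\tilde f_n)$ is also uniformly Cauchy on $X$. Therefore $K_pf_n=(K_q\tilde f_n-H_UK_q\tilde f_n)|_U$ converges uniformly on $U$, establishing compactness.

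For part (b), I would apply Theorem \ref{compact} to $(U,\W_U)$, which inherits (B) as $U$ is relatively compact: $K_p$ is compact on $\bbu$ iff $\|K_p1_{U\sms V_n}\|_\infty\to 0$ along an exhaustion $V_n\uparrow U$. For the $\Leftarrow$ direction, given $p\in\C_0(U)$ and $\ve>0$, one picks a compact $K\subset U$ with $p<\ve$ off $K$ and then $V_n\supset K$; then $K_p1_{U\sms V_n}\le p<\ve$ on $U\sms V_n$, while on $V_n$ this function is harmonic (its carrier lies off $V_n$), so the maximum principle bounds it by its values on $\partial V_n$, still below $\ve$. For the $\Rightarrow$ direction, I would decompose $p=K_p1_{V_n}+K_p1_{U\sms V_n}$: the second summand is uniformly small by hypothesis, and the first is a continuous real potential on $(U,\W_U)$ with compact carrier in $\ov V_n\subset U$ which, by \emph{regularity of $U$}, lies in $\C_0(U)$. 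The inclusion $K_p(\bbu)\subset\C_0(U)$ then follows from the pointwise bound $|K_pf|\le\|f\|_\infty p\in\C_0(U)$.

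The main obstacle will be this regularity step in (b): showing that a continuous real potential on $(U,\W_U)$ with compact carrier in $U$ must lie in $\C_0(U)$ whenever $U$ is regular. The intended mechanism is that every $y\in\partial U$ is a regular boundary point, so the harmonic sweeping $H_U$ recovers continuous boundary values from above; writing the given compactly-carried potential as a difference $g-H_Ug$ in the style of part (a) then forces it to vanish at~$\partial U$.
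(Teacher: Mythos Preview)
Your identification of $K_p$ in (a) is correct and more detailed than the paper's ``obviously''. For the compactness of $K_p$ in (a), your route via Arzel\`a--Ascoli on $\{K_q\tilde f_n\}$ differs from the paper's: the paper verifies criterion (2) of Theorem~\ref{compact} on $(U,\W_U)$ by noting that $K_p1_{U\setminus V_n}\le K_q1_{U\setminus V_n}=:q_n$, where $q_n\downarrow 0$ in $\C(X)$ and hence uniformly on the compact $\overline U$ by Dini. Your approach is viable, but has a gap as written: the harmonic measures $H_U(x,\cdot)$ are carried by $X\setminus U$ (in the non-local case they may charge all of $X\setminus U$, not just $\partial U$), so uniform convergence of $(K_q\tilde f_n)$ on $\overline U$ does \emph{not} by itself let you apply the contractivity of $H_U$. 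You must first use the domination principle (exactly as in the proof of Proposition~\ref{KC-compact}): since $C(K_q\tilde f_n)\subset\overline U$ and $1\in\W$, uniform Cauchy on $\overline U$ forces uniform Cauchy on all of $X$, and only then does $H_U$-contractivity give what you want.

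The more serious gap is in your regularity step for (b). An arbitrary continuous real potential on $(U,\W_U)$ with compact carrier in $U$ need \emph{not} be of the form $(g-H_Ug)|_U$ for some $g\in\px$, so your proposed mechanism of ``writing the given compactly-carried potential as a difference $g-H_Ug$'' does not go through. The paper instead argues by domination: choose a \emph{strict} $q_0\in\px$, so that $p_0:=q_0-H_Uq_0$ is strictly positive on $U$ and lies in $\C_0(U)$ by regularity of $U$; then for each $n$ one has $K_p1_{V_n}\le a\,p_0$ on the compact $\overline{V_n}$ for some $a>0$, hence $K_p1_{V_n}\le a\,p_0$ on $U$ by the domination principle, which places $K_p1_{V_n}$ in $\C_0(U)$. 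Your decomposition $p=K_p1_{V_n}+K_p1_{U\setminus V_n}$ and the $\Leftarrow$ direction are fine (though ``values on $\partial V_n$'' should again be replaced by the domination principle on the carrier $\overline{U\setminus V_n}$, since harmonic measures need not sit on $\partial V_n$).
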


    \begin{proof} 
     If $w\in \W\cap \C(X)$, $w>0$, then $w_0:=w|_U\in \W_U\cap \B_b(U)$. 

     (a) Obviously,  $K_p=K_q-H_UK_q$. Let $(V_n)$ be an exhaustion of $U$. 
     We have $q_n:=K_q1_{U\setminus V_n}\downarrow 0$, where
      $ q_n\in \C(X)$, and hence $q_n\downarrow 0$ uniformly on $\ov U$ as $n\to\infty$.
      Since $  K_p1_{U\setminus V_n}\le q_n$ for every $n\in\nat$, we~obtain that
      $\limn\| K_p1_{U\setminus V_n}\|_\infty=0$. Thus~$K_p$ is a~compact operator
      on~$\B_b(U)$, by~Theorem \ref{compact}.

      (b) By Theorem \ref{compact}, it suffices to observe that $p_n:=K_U1_{V_n}\in\C_0(U)$  for every $n\in\nat$.
      (To see this we may take any strict potential $q_0\in \px$, consider $p_0:=q_0-H_Uq_0\in \pub\cap \C_0(U)$,
      fix $x\in\nat$, 
      and note that $p_n\le a p_0$ on $\ov V_n$ for some $a>0$, hence $p_n\le a p_0$ on $U$.)
      \end{proof}

{\small \noindent 
Wolfhard Hansen,
Fakult\"at f\"ur Mathematik,
Universit\"at Bielefeld,
33501 Bielefeld, Germany, e-mail:
 hansen$@$math.uni-bielefeld.de}

\end{document}